\newtheorem{dfn}{Definition}
\newtheorem{thm}[dfn]{Theorem}
\newtheorem{prp}[dfn]{Proposition}
\newtheorem{lem}[dfn]{Lemma}
\theoremstyle{remark}
\newtheorem{rem}[dfn]{Remark}
\def\B{\mathbb{B}}
\def\inn{\operatorname{int}}
\def\R{\mathbb{R}}
\def\darea{\mathrm{darea}}
\def\arcosh{\operatorname{arcosh}}
\title[Isoperimetric inequality on cone-surfaces]{A simple proof\\ of an isoperimetric inequality\\ for euclidean and hyperbolic cone-surfaces}
\author{Ivan Izmestiev}
\thanks{Supported by the European Research Council under the European Union's Seventh Framework Programme (FP7/2007-2013)/\allowbreak ERC Grant agreement no.~247029-SDModels}
\address{Institut f\"ur Mathematik \\
Freie Universit\"at Berlin \\
Arnimallee 2 \\
D-14195 Berlin \\
 GERMANY}
\email{izmestiev@math.fu-berlin.de}
\begin{document}

\begin{abstract}
We prove that the isoperimetric inequalities in the euclidean and hyperbolic plane hold for all euclidean, respectively hyperbolic, cone-metrics on a disk with singularities of negative curvature. This is a discrete analog of the theorems of Weil and Bol that deal with Riemannian metrics of curvature bounded from above by $0$, respectively by $-1$. A stronger discrete version was proved by A.~D.~Alexandrov, with a subsequent extension by approximation to metrics of bounded integral curvature.

Our proof uses ``discrete conformal deformations'' of the metric that eliminate the singularities and increase the area. Therefore it resembles Weil's argument that uses the uniformization theorem and the harmonic minorant of a subharmonic function.
\end{abstract}

\maketitle

\section{Introduction}
\subsection{The main theorem}
A \emph{euclidean cone-metric} $g$ on a closed surface $M$ is a path metric structure such that every point has a neighborhood isometric either to an open euclidean disk or to a neighborhood of the apex of a euclidean cone with angle $\omega \in (0, +\infty) \setminus \{2\pi\}$ around the apex.
%(that can be described as the quotient by $(r,\phi) \sim (r,\phi+\omega)$ of the universal branched cover
If $M$ has non-empty boundary, then we require that every boundary point has a neighborhood isometric either to a half-disk or to a circular sector of angle $\theta \in (0, +\infty) \setminus \{\pi\}$. \emph{Hyperbolic cone-metrics} on surfaces are defined similarly. A typical example is the metric space obtained by gluing together euclidean (respectively hyperbolic) triangles. Conversely, every cone-surface can be triangulated so that the metric induced on the triangles is euclidean, respectively hyperbolic.

The set of cone-like interior and angle-like boundary points is called the \emph{singular locus} of the metric $g$. An interior cone point with angle $\omega$ is said to have \emph{curvature} $2\pi - \omega$.

\begin{thm}
\label{thm:Main}
For every euclidean cone-metric $g$ on a disk $\B^2$ such that all cone points have negative curvatures the following inequality holds:
\begin{equation}
\label{eqn:IsoperEucl}
L^2 \ge 4\pi A
\end{equation}
where $A$ is the area and $L$ the perimeter of $(\B^2, g)$.

For every hyperbolic cone-metric $g$ on a disk $\B^2$ such that all cone points have negative curvatures the following inequality holds:
\begin{equation}
\label{eqn:IsoperHyp}
L^2 \ge 4\pi A + A^2
\end{equation}
\end{thm}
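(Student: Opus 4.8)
The plan is to imitate Weil's proof of the smooth inequality for $K\le 0$ (and Bol's for $K\le -1$) in the discrete setting: deform the given cone-metric $g$ into a \emph{non-singular} constant-curvature metric $g'$ on $\B^2$ that has the same perimeter $L$ and area $A' \ge A$, and then invoke the classical isoperimetric inequality for $g'$. Since in the euclidean case $L^2\ge 4\pi A'\ge 4\pi A$, and in the hyperbolic case the right-hand side $4\pi A'+(A')^2$ is monotone increasing in $A'$ so that $L^2 \ge 4\pi A'+(A')^2 \ge 4\pi A + A^2$, both inequalities in Theorem~\ref{thm:Main} follow at once. The whole problem is thus shifted onto producing the area-nondecreasing, perimeter-preserving desingularization.

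For the deformation I would use discrete conformal changes of the triangulated cone-metric. Triangulate $(\B^2,g)$ so that all cone points are vertices, and parametrize the discrete conformal class by scale factors $u\colon V\to\R$, the edge lengths transforming by $\ell_{ij}\mapsto e^{(u_i+u_j)/2}\ell_{ij}$ in the euclidean case (and by the corresponding $\sinh$-relation in the hyperbolic case). The condition that every interior cone point has negative curvature is the discrete analog of $u$ being subharmonic, $\Delta u \ge 0$, i.e.\ of $K\le 0$; flattening such a cone point (driving its angle back to $2\pi$) forces one to \emph{raise} $u$ at the interior vertices, and raising interior scale factors enlarges the incident triangles, hence increases the area. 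Keeping $u$ fixed on the boundary vertices leaves the boundary edge lengths, and therefore $L$, unchanged. Passing to the conformally equivalent non-singular metric is then exactly the discrete counterpart of replacing $u$ by the harmonic function with the same boundary values, and the area comparison $A'\ge A$ is the discrete maximum principle that $u\le h$ for subharmonic $u$.

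The hard part will be to make this desingularization rigorous and global. First, one must show that interior scale factors flattening \emph{all} cone points simultaneously, with the boundary held fixed, actually exist and keep the data a genuine cone-metric (the triangle inequalities must survive); this is the discrete Dirichlet/uniformization problem, and I expect it to require allowing Delaunay edge flips together with the convexity of the associated discrete conformal energy, which is the technical substitute for the harmonic minorant. Second, the area monotonicity has to be proved in this possibly-retriangulated family rather than merely asserted. Finally, the classical inequality must be applied to $g'$, whose developing map into the model plane need not be injective; this is handled by the isoperimetric inequality for flat, respectively curvature $-1$, disks whose boundary has rotation index one. If the global variational route proves delicate, I would instead flatten one cone point at a time by an explicit local move and verify the area increase by a direct computation, trading the global machinery for a careful local estimate.
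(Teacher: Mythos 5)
Your reduction is exactly the paper's: desingularize while keeping $L$ and not decreasing $A$ (this is Proposition~\ref{prp:ElimCone}), then quote the classical inequalities, with the correct monotonicity remark in the hyperbolic case. The problem is the main route you propose for the desingularization. Using Luo-type factors $\ell_{ij}\mapsto e^{(u_i+u_j)/2}\ell_{ij}$ and solving a discrete Dirichlet problem invokes the full discrete uniformization machinery (convex functionals plus Delaunay flips), in a boundary-value version that is not off the shelf; but the more serious gap is the area comparison. Your justification, ``raising interior scale factors enlarges the incident triangles, hence increases the area,'' is false as stated: increasing the side lengths of a triangle does not in general increase its area --- the triangle with sides $(1,1,2-\epsilon)$ has each side at least as long as the equilateral triangle $(1,1,1)$, yet its area tends to $0$ as $\epsilon\to 0$. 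Likewise, the asserted ``discrete maximum principle'' (negative curvature at the cone points forces $u\ge 0$ at interior vertices after flattening) is not a known easy fact in this setting, and once edge flips enter, even formulating it requires care. So the discrete-subharmonicity analogy, which the paper also draws but only as motivation, remains a heuristic in your write-up, not an argument.

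Your fallback sentence --- flatten one cone point at a time by an explicit local move --- is precisely what the paper does, but the entire content lies in the choice of the move, which you leave unspecified. The paper first takes a \emph{minimal} geodesic triangulation, all of whose interior vertices are cone points (this requires Proposition~\ref{prp:TriangSurf}, proved by a scissors-cut induction), picks an interior vertex $p$, and replaces each incident edge length $\ell_{pq}$ by $\sqrt{\ell_{pq}^2+t}$ in the euclidean case, or by $\arcosh(e^t\cosh \ell_{pq})$ in the hyperbolic case, leaving everything outside the star of $p$ untouched. The point of these particular formulas is that the deformed star is realized by lifting $p$ orthogonally off its star to the apex of a pyramid; this single geometric picture simultaneously yields the three facts one needs: the new lengths satisfy the triangle inequalities for all $t>0$; each new triangle has strictly larger area, because orthogonal projection decreases area in both geometries; and the cone angle at $p$ tends to $0$ as $t\to+\infty$, so there is a first $t_0$ at which the angle at $p$ or at one of its neighbors equals $2\pi$, killing at least one cone point while all remaining interior angles stay $\ge 2\pi$. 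Iterating with a new minimal triangulation finishes the proof. Until you either supply a local move with these verifiable properties or actually prove existence and the maximum principle in the discrete conformal setting, your proposal is a plan rather than a proof.
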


Inequalities \eqref{eqn:IsoperEucl}, respectively \eqref{eqn:IsoperHyp} hold for all euclidean, respectively hyperbolic metrics on a disk, as a consequence of the isoperimetric inequalities in the euclidean, respectively hyperbolic, plane. Therefore Theorem \ref{thm:Main} is implied by the following proposition.

\begin{prp}
\label{prp:ElimCone}
For every euclidean or hyperbolic cone-metric $g$ on $\B^2$ such that all cone points have negative curvatures there is a euclidean, respectively hyperbolic, metric on $\B^2$ with the same perimeter and larger area.
\end{prp}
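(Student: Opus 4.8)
The plan is to follow the strategy announced in the abstract and imitate Weil's proof, with the uniformization theorem and the passage to a harmonic minorant replaced by an explicit discrete deformation. I would first note that the singular locus $\sing(g)$ of a cone-metric on the compact disk $\B^2$ is finite, so there are only finitely many interior cone points $p_1,\dots,p_n$, each of curvature $2\pi-\omega_k<0$, i.e.\ with cone angle $\omega_k>2\pi$. The target is a constant-curvature \emph{nonsingular} metric on $\B^2$ (curvature $0$ in the euclidean, $-1$ in the hyperbolic case) with the same perimeter and strictly larger area: once such a metric is produced, it develops into the euclidean, resp.\ hyperbolic, plane and the classical isoperimetric inequality there yields \eqref{eqn:IsoperEucl}, resp.\ \eqref{eqn:IsoperHyp}, which already contains the extra term $A^2$ in the hyperbolic case. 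Thus the entire content is the \emph{existence} of the better metric.

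The guiding analogy is Weil's: a negative cone point behaves exactly like the positive part of the distributional Laplacian of a \emph{subharmonic} conformal factor (writing $g=e^{2u}g_0$, each $p_k$ contributes a Dirac mass $2\pi(\alpha_k-1)\delta_{p_k}\ge 0$ with $\alpha_k=\omega_k/2\pi>1$). Weil replaces $u$ by the harmonic function with the same boundary values: this fixes the boundary metric (hence the perimeter) and, by subharmonicity, lies \emph{above} $u$ (hence enlarges the area). To discretize this I would fix a geodesic triangulation having $p_1,\dots,p_n$ among its vertices, and deform the metric by a \emph{discrete conformal} rescaling of the edge lengths governed by vertex weights $u_i$ (the euclidean formula $\ell_{ij}\mapsto e^{(u_i+u_j)/2}\ell_{ij}$, and its hyperbolic analogue via $\sinh$). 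Imposing $u_i=0$ at the boundary vertices keeps every boundary edge, and therefore the perimeter, unchanged; the interior weights are then chosen so that every interior cone angle becomes $2\pi$. This is a discrete Dirichlet/uniformization problem, and the discrete maximum principle should force $u_i\ge 0$ in the interior, which is the mechanism behind the area increase. The local reason is elementary: flattening a cone of angle $\omega>2\pi$ and radius $R$ to a round disk of the same boundary length $\omega R$ raises the area from $\tfrac12\omega R^2$ to $\omega^2R^2/(4\pi)$, and $\omega^2R^2/(4\pi)>\tfrac12\omega R^2$ precisely because $\omega>2\pi$.

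I expect the main obstacle to be that this deformation \emph{cannot} be carried out as a local surgery. Gauss--Bonnet forbids it: if one excises a metric disk $D_R(p_k)$, whose boundary circle carries total geodesic curvature $\omega_k$, no flat disk can be glued back along a regular geodesic seam, since a flat disk's boundary must carry total geodesic curvature $2\pi\ne\omega_k$. The excess $\omega_k-2\pi>0$ cannot simply disappear; it must be redistributed globally — exactly what the harmonic replacement does in the smooth case. Consequently the crux of a rigorous argument is twofold: (i) \emph{producing} the deformation, i.e.\ establishing existence (and uniqueness) of the interior weights, most naturally via the convexity of an associated discrete energy, together with control of the combinatorics as Delaunay flips occur and of possible degenerations of triangles; and (ii) verifying that the resulting object is genuinely a nonsingular cone-metric, that its area is strictly larger, and that all $n$ singularities are removed. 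An alternative to solving the full problem at once is to eliminate the cone points one at a time (inducting on $n$, or running a monotone deformation), which trades the global existence difficulty for the bookkeeping of keeping the intermediate metrics admissible; in either form, step (i) together with the monotonicity in step (ii) is where the real work lies, and the hyperbolic case requires rerunning the same scheme with hyperbolic trigonometry in place of euclidean.
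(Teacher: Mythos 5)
Your proposal correctly identifies the guiding analogy (Weil's harmonic minorant) and even the right family of tools (discrete conformal deformations), but it is a research plan rather than a proof: the two steps you yourself flag as ``where the real work lies'' --- existence of interior weights $u_i$ flattening all the cone angles, and the strict area increase --- are precisely the content of the proposition, and both are left unproven. Worse, the mechanism you propose for the second step fails for the deformation you chose. For the Luo-type rescaling $\ell_{ij}\mapsto e^{(u_i+u_j)/2}\ell_{ij}$, nonnegativity of the weights does \emph{not} imply that areas grow: take a euclidean equilateral triangle with side $1$ and weights $(0,s,s)$; the sides become $(e^{s},e^{s/2},e^{s/2})$, so at $e^{s/2}=2$ the triangle degenerates to zero area, and for larger $s$ the triangle inequality fails altogether. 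So a discrete maximum principle, even if you could establish one, would not give $A'>A$. Moreover, prescribing angle $2\pi$ at every interior vertex with this rescaling is the discrete uniformization problem, which in general is solvable only if the combinatorics is allowed to change by flips; that is a deep theorem, not an elementary convexity argument, and you would then also need area monotonicity across flips. Note that the paper explicitly remarks (end of the introduction) that Luo's rescaling is a \emph{different} discrete conformal analog from the one it uses.

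The paper's actual route avoids any global Dirichlet-type problem and makes both hard steps trivial by a better choice of deformation. Using a minimal geodesic triangulation in which every interior vertex is a cone point (itself a statement requiring proof, Proposition \ref{prp:TriangSurf}), it elongates only the edges incident to one interior vertex $p$ by $\ell\mapsto\sqrt{\ell^2+t}$ in the euclidean case, resp.\ $\ell\mapsto\arcosh(e^t\cosh\ell)$ in the hyperbolic case. By the Pythagorean theorem this deformation is realized by lifting $p$ orthogonally out of the surface, so that each triangle at $p$ becomes a lateral face of a pyramid over the old triangle; hence the triangle inequalities hold for every $t>0$, the area strictly increases (orthogonal projection decreases area), and the angle at $p$ tends to $0$ as $t\to+\infty$. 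One stops at the first $t_0$ at which some angle --- at $p$ or at a neighbor --- reaches $2\pi$: that vertex is then nonsingular, the perimeter never changed (boundary edges are untouched), and one iterates with a new minimal triangulation on a surface with fewer cone points. Your instinct that the curvature excess must be ``redistributed globally'' rather than removed by local surgery is correct, but the paper shows this redistribution can be done one vertex at a time, with no existence theorem needed.
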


Stronger versions of Proposition \ref{prp:ElimCone} and Theorem \ref{thm:Main} were proved by A.~D.~Ale\-xandrov, see Section \ref{sec:Alex} below. The aim of the present article is to give a new proof that is simple and in some sense conceptually attractive.

\subsection{Weil's isoperimetric problem}
Theorem \ref{thm:Main} can be viewed as the discrete analog of the following theorem.

\begin{thm}
\label{thm:Weil}
For every Riemannian metric on a disk $\B^2$ with the Gauss curvature $K(x) \le 0$ the euclidean isoperimetric inequality holds.

For every Riemannian metric on a disk $\B^2$ with $K(x) \le -1$ the hyperbolic isoperimetric inequality holds.
\end{thm}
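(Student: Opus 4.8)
The plan is to follow Weil's classical argument, handling both cases in parallel through a conformal representation and a maximum-principle comparison. First I would use the uniformization theorem to put $g$ into conformal normal form over the flat disk: after a conformal change of coordinates we may write $g = e^{2u}\,|dz|^2$ on the standard disk $\B^2\subset\mathbb{C}$ for a smooth function $u$. With this normalization the Gauss curvature is $K = -e^{-2u}\Delta u$, where $\Delta$ is the ordinary flat Laplacian. The hypothesis $K\le 0$ then reads $\Delta u\ge 0$, i.e.\ $u$ is subharmonic, whereas $K\le -1$ reads $\Delta u\ge e^{2u}$, i.e.\ $u$ is a subsolution of the Liouville equation $\Delta w = e^{2w}$ whose solutions are exactly the hyperbolic ($K\equiv -1$) conformal factors.

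Next I would produce a constant-curvature competitor with the same boundary values. In the euclidean case let $h$ be the harmonic function on $\B^2$ with $h|_{\partial\B^2}=u|_{\partial\B^2}$; in the hyperbolic case let $w$ solve $\Delta w = e^{2w}$ with $w|_{\partial\B^2}=u|_{\partial\B^2}$. In either case the comparison principle forces the competitor to dominate $u$: setting $\phi = u-h$ (resp.\ $\phi = u-w$), the function $\phi$ vanishes on $\partial\B^2$ and satisfies $\Delta\phi\ge 0$ (resp.\ $\Delta\phi\ge e^{2u}-e^{2w}$), so at a putative positive interior maximum one would have $\Delta\phi\le 0$ while the right-hand side is strictly positive, a contradiction; hence $u\le h$ (resp.\ $u\le w$). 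The metric $e^{2h}\,|dz|^2$ (resp.\ $e^{2w}\,|dz|^2$) is flat (resp.\ hyperbolic); since its conformal factor agrees with that of $g$ along $\partial\B^2$ its perimeter is again $L$, while the pointwise inequality $u\le h$ (resp.\ $u\le w$) shows that its area $A_0$ satisfies $A_0\ge A$.

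Finally I would invoke the isoperimetric inequalities for constant-curvature metrics on the disk, which (as recorded above) follow from the corresponding inequalities in the plane: $L^2\ge 4\pi A_0$ in the flat case and $L^2\ge 4\pi A_0 + A_0^2$ in the hyperbolic case. Since $A_0\ge A$ and both right-hand sides are monotone increasing in the area, the inequalities $L^2\ge 4\pi A$ and $L^2\ge 4\pi A + A^2$ for $g$ follow immediately.

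The main obstacle I anticipate is analytic rather than geometric. The argument leans on the full strength of uniformization together with enough boundary regularity to make the conformal factor $u$, the perimeter integral, and the Dirichlet data all well behaved up to $\partial\B^2$. In the hyperbolic case there is the additional difficulty that the competitor is governed by the \emph{nonlinear} Liouville equation, so its existence and regularity cannot be read off from linear potential theory but must be obtained by a sub-/supersolution (monotone iteration) scheme, using $u$ itself as a subsolution. It is precisely this reliance on heavy analytic machinery that the discrete argument of Proposition~\ref{prp:ElimCone} is designed to avoid.
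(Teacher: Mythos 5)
Your proposal reconstructs, in its euclidean half, exactly the argument that the paper itself records: the paper does not reprove Theorem~\ref{thm:Weil} (it attributes it to Weil, Beckenbach--Rad\'o and Bol), but its subsection on subharmonic functions and conformal deformations sketches precisely your scheme, only with the opposite sign convention ($g=e^{-2u}\tilde g$ with $u$ superharmonic and a harmonic \emph{minorant}, versus your $g=e^{2u}\,|dz|^2$ with $u$ subharmonic and a harmonic majorant --- the two are equivalent). Your hyperbolic half, replacing the harmonic competitor by a solution of the Liouville equation $\Delta w=e^{2w}$ with the same boundary data and comparing via the maximum principle, is the natural nonlinear extension and is sound in outline; existence of $w$ by monotone iteration between the subsolution $u$ and a large constant supersolution works as you describe. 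The one step you treat too lightly is the last one: a flat (or hyperbolic) metric on $\B^2$ need not embed isometrically into $\R^2$ (or the hyperbolic plane) --- its developing map may overlap itself --- so the isoperimetric inequality for such metrics is \emph{not} an immediate consequence of the inequality in the plane. The paper flags exactly this point at the end of its sketch (``if one is able to deal with possible self-overlaps of the development of $(M,g')$ onto $\R^2$''); closing it requires a separate argument for immersed constant-curvature disks (for instance a winding-number inequality, or Alexandrov's inequality \eqref{eqn:AlexStrong} by approximation). Since the paper itself invokes the disk version of the isoperimetric inequalities just before Proposition~\ref{prp:ElimCone}, your reliance on it is consistent with the paper's own usage, but you should state it as an imported nontrivial fact rather than as something that ``follows from'' the planar inequality.
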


The first part was proved independently by Weil \cite{Weil26} and Beckenbach and Rad\'o \cite{BR33}. The second part is due to Bol \cite{Bol41}.

Aubin \cite{Aub76} conjectured that a similar result holds in higher dimensions: a simply connected $n$-manifold of non-positive sectional curvature satisfies the isoperimetric inequality of $\R^n$. Later the conjecture was extended to metrics with sectional curvature bounded by $\kappa \le 0$, replacing $\R^n$ by the space-form of curvature $\kappa$. As for now, only the cases $n=3$ for any $\kappa \le 0$ \cite{Kle92} and $n=4$ for $\kappa = 0$ \cite{Cro84} are verified. See \cite{KK13+} for a novel approach and new partial results.

\subsection{Surfaces of bounded curvature in the sense of Alexandrov}
\label{sec:Alex}
A.~D.~Alexandrov's stronger version of Theorem \ref{thm:Main} is
\begin{equation}
\label{eqn:AlexStrong}
L^2 \ge 2(2\pi - \kappa^+)A - kA^2
\end{equation}
where the cone-surface is allowed to have singularities of positive curvature, $\kappa^+$ denotes the sum of all positive curvatures, and $k$ stands for the curvature of the model space ($k=0$ in the euclidean and $k=-1$ in the hyperbolic case). Alexandrov's method consists in repeated cutting of the surface along piecewise geodesics paths and gluing in polygonal regions along the cuts.

By approximation, inequality \eqref{eqn:AlexStrong} (with $\kappa^+$ duly defined in dependence on $k$) holds for all metrics of \emph{bounded integral curvature}, a broad class introduced by Alexandrov, that includes both Riemannian and cone-metrics. The equality holds only if $M$ is a circular neighborhood of the apex of a cone. See \cite[Section 2.2]{BZ88} for more details and references. In particular, this generalizes the inequality
\[
L^2 \ge 2 \left( 2\pi - \int_M K\, \darea \right)A
\]
proved first by Fiala \cite{Fia41} for analytic Riemannian metrics of positive Gauss curvature.

%Alexandrov's method of proving \eqref{eqn:AlexStrong} consists in successively deforming the metric by cutting and pasting.

\subsection{Subharmonic functions and conformal deformations}
Weil's proof of the first part of Theorem \ref{thm:Weil} goes as follows. A metric with non-positive Gauss curvature uniformizes with a subharmonic conformal factor:
\[
g = e^{-2u} \tilde{g}, \quad 0 \ge K = e^{2u} \Delta u
\]
where the metric $\tilde{g}$ is flat. A harmonic function that coincides with $u$ on $\partial M$ minorizes $u$:
\[
\Delta v = 0, \, v|_{\partial M} = u|_{\partial M} \Rightarrow v \le u
\]
Hence for a flat metric $g' \colonequals e^{-2v}\tilde{g}$ we have
\[
L' = L, \quad A' = \int_M e^{-2v}\, \darea \ge \int_M e^{-2u}\, \darea = A
\]
where the integration is done with respect to the area element of $\tilde{g}$. Thus the theorem is reduced to the isoperimetric problem in the euclidean plane (if one is able to deal with possible self-overlaps of the development of $(M,g')$ onto $\R^2$).

\subsection{Plan of the paper}
In Section \ref{sec:Triang} we show that every euclidean or hyperbolic cone-surface (independently of its topological type and curvature signs) can be geodesically triangulated without adding unnecessary vertices. Namely, all interior vertices of the triangulation are cone-points, and one is allowed to add a vertex on a boundary component if it is geodesic, that is contains no angle points.

In Section \ref{sec:Deform} we explain how to deform the edge lengths in such a minimal triangulation of a negatively curved cone-surface so that its area increases and one of the interior singularities disappears. By iterating this step, taking each time a new triangulation with fewer vertices, we arrive to a metric without cone points that has a larger area and the same perimeter as the initial cone-metric. This proves Proposition \ref{prp:ElimCone} and hence Theorem \ref{thm:Main}.

The basic deformation step is a special elongation of all edges incident to some interior vertex. This can be viewed as a discrete analog of a conformal deformation with non-negative factor. In this respect our method resembles Weil's argument from the previous section.

Note that a different discrete analog of conformal deformations was proposed in \cite{Luo04}.

\section{Triangulating cone-surfaces}
\label{sec:Triang}
% For basic facts about cone-metrics we refer to \cite{CHK00}.

Let $(M,g)$ be a euclidean or hyperbolic cone-surface, possibly with boundary. A \emph{geodesic triangulation} of $(M,g)$ is a decomposition of $M$ into euclidean, respectively hyperbolic, triangles with disjoint interiors such that every side of every triangle is either contained in the boundary of $M$ or glued to another side of another or the same triangle. Clearly, the vertex set of a geodesic triangulation contains the singular locus of $g$.
% and every edge is a simple geodesic arc that has only its endpoints in common with the vertex set. Conversely, if we choose a finite set $V \subset M$ and draw a collection of simple geodesic arcs with endpoints in $V$ such that the complement to these arcs is a union of open euclidean (respectively hyperbolic) triangles, then we obtain a geodesic triangulation.

Every cone-surface can be geodesically triangulated.
The main result of this section is Proposition \ref{prp:TriangSurf}, which implies the existence of a \emph{minimal} triangulation, that is one all of whose vertices are singular points of the metric, with addition of one extra vertex on every geodesic component of the boundary. (If $M$ is closed and has no cone points, then one has to add a non-singular vertex in the interior.) The result is probably not new, but we couldn't find a reference. An analogous statement is false in dimension~$3$, see Remark \ref{rem:Triang3}.

% A geodesic arc will be called \emph{interior}, if its intersection with $\partial M$ is a subset of the set of its endpoints. 
% We will decompose $M$ into triangles by a sequence of ``scissors cuts''. 
By a \emph{scissors cut} we mean a simple geodesic that starts at a point $p \in V \cap \partial M$ and ends at a point $q \in V$ without meeting neither points of $V$ nor $\partial M$ on the way.

\begin{prp}
\label{prp:TriangSurf}
Let $(M,g)$ be a euclidean or hyperbolic cone-surface, possibly with boundary, and $V \subset M$ be a finite non-empty set, such that
\begin{itemize}
\item $V$ contains the singular locus of $g$;
\item every component of $\partial M$ contains at least one point from $V$.
\end{itemize}
Then there exists a geodesic triangulation of $(M,g)$ with the vertex set $V$.
\end{prp}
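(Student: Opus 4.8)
The plan is to argue by induction, repeatedly cutting $(M,g)$ along admissible geodesic arcs with endpoints in $V$ until only triangles remain; a routine lexicographic complexity (the topology of $M$, then the number of interior vertices, then the number of boundary vertices) will guarantee that the process terminates, and each cut will be arranged to preserve the two hypotheses on $V$. The base case is a single triangle, by which I mean a disk whose boundary is a union of three geodesic arcs meeting at three points of $V$ with no vertex in its interior. For such a piece Gauss--Bonnet forces the three interior angles to sum to $\pi$ in the euclidean case, and to $\pi$ minus the area in the hyperbolic case, with each angle in $(0,\pi)$; hence the developing map of this simply connected, cone-point-free disk into the model plane is an isometry onto an honest model triangle, and the piece is already its own triangulation.

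To feed the induction I need a supply of cuts. The basic tool is the \emph{nearest-vertex} construction: for $p\in V$, a shortest geodesic $\gamma$ from $p$ to $V\setminus\{p\}$ exists by compactness (Hopf--Rinow for cone-surfaces), is simple because any self-intersection could be shortcut, and has interior disjoint from $V$, since a vertex in its interior would be strictly nearer to $p$ than the endpoint. I would use such arcs in three stages. First, while $M$ is not a disk I cut along a simple geodesic arc between vertices that is non-separating or joins two boundary components, strictly simplifying the topology; on each resulting piece the endpoints of the cut lie on new boundary, so the hypotheses on $V$ persist. Second, while a disk still carries an interior vertex I join it to a boundary vertex by a scissors cut and slit along it, which keeps a disk and moves one vertex from the interior to the boundary. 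Finally $M$ is a disk with all of $V$ on $\partial M$, a geodesic polygon; by Gauss--Bonnet the exterior angles sum to a positive number, so some vertex is convex, and from a convex vertex one can draw a diagonal, which is again a scissors cut, splitting the polygon into two polygons with fewer vertices. By induction all pieces triangulate, and regluing along the cuts yields a geodesic triangulation of $(M,g)$ with vertex set $V$.

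The step I expect to be the main obstacle is upgrading these shortest arcs to genuine \emph{scissors} cuts and controlling them globally. Two difficulties dominate. A shortest geodesic may run into $\partial M$ at an interior point; as that point lies outside $V$ the boundary is a smooth geodesic there, and I must either rule out, or turn to advantage, the arc's sliding along the boundary, so as to obtain an arc that genuinely enters the open surface at $p$ and keeps its interior off $\partial M$. More seriously, the developing map of a piece into the model plane need not be injective, so the assertions that a diagonal stays inside $M$, avoids the remaining vertices, and is embedded cannot be read from a picture in $\R^2$ or $\mathbb{H}^2$ and must be established intrinsically, once more by minimizing distance to an entire vertex set rather than to a prescribed target. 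This last device also disposes of the genuinely negative-curvature phenomenon that a shortest geodesic between two \emph{fixed} points may legitimately pass through a cone point of angle greater than $2\pi$: minimizing the distance from $p$ to all of $V$ at once forces the minimizer to halt at the first cone point it reaches. Finally, the closed case offers no boundary vertex to launch a scissors cut, so I would open $M$ up beforehand by cutting along a shortest geodesic between two vertices, or, when $|V|=1$, along a shortest noncontractible geodesic loop based at the single vertex; thereafter a boundary carrying a vertex is available and the preceding argument applies.
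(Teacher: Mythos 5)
Your skeleton---cut along geodesic arcs with endpoints in $V$ and interiors avoiding $V\cup\partial M$, induct on a lexicographic complexity, base case a triangle---is the same as the paper's, and the pieces you actually prove (simplicity of shortest arcs, their interiors avoiding $V$, the termination bookkeeping, the Gauss--Bonnet base case) are fine. The genuine gap is exactly the one you flag as ``the main obstacle'': the existence of a scissors cut is never established, and the nearest-vertex device you offer cannot supply it. The shortest geodesic from $p$ to $V\setminus\{p\}$ is very often an edge of $\partial M$ (for a regular polygon it always is), and cutting along a boundary edge accomplishes nothing. Worse, when some boundary vertex has angle $\ge\pi$ --- which your hypotheses permit, since $V$ constrains only the location of the singular points, not their angles --- a shortest arc from a boundary vertex to an interior vertex can begin by running along boundary edges and turning at reflex vertices, so truncating at the first point of $V$ met again returns a boundary edge rather than a cut. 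This objection hits all three of your stages: the topology-reducing arc of stage~1, the interior-to-boundary arc of stage~2, and the ``diagonal from a convex vertex'' of stage~3 (which, as you yourself note, is the planar-polygon argument and is unavailable when the developing map is not injective) are asserted, not constructed.

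What is missing is precisely the paper's Lemma~\ref{lem:ConcAngle}, whose proof is where the real work happens, via a dichotomy on boundary angles. If every boundary vertex has angle $<\pi$, the boundary is convex: then either some point of $V$ is interior, and a shortest path from a boundary vertex $p$ to it cannot touch the convex boundary except at $p$, so truncating it at the first point of $V$ it meets yields a scissors cut; or $V\subset\partial M$, and the paper identifies $M$ with a convex model polygon with vertex set $V$, where diagonals exist. If instead some boundary vertex $p$ has angle $\ge\pi$, one restricts the exponential map to the angle at $p$ and analyzes the first radius at which it either fails to be injective or meets a boundary edge; each failure mode produces a scissors cut based at $p$ (a geodesic to a cone point of positive curvature, a geodesic loop at $p$, or an arc obtained by sliding the hit point along the edge until it reaches a vertex). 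Until you prove a statement of this kind, every stage of your induction remains conditional, so what you have is a correct plan --- essentially the paper's plan --- rather than a proof.
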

\begin{proof}
If $\partial M = \emptyset$, then create a boundary by cutting $M$ along any simple geodesic arc with the endpoints, and only them, in $V$. To show that such a geodesic exists, one can use the exponential map based at a point $p \in V$.

% Such a geodesic exists: take any $p \in V$ and consider the exponential map $\exp_p$ from the euclidean cone of angle $\omega(p)$ to $M$. This map is defined at least in some neighborhood of the apex. Let $r$ be the radius of the maximal open ball centered at the apex, the restriction of $\exp_p$ to which is injective and disjoint from $V \setminus \{p\}$. Then either there is a simple closed geodesic based at $p$ of length $2r$ or there is a simple geodesic arc of length $r$ joining $p$ to another point of $V$.
% If the geodesic arc has different endpoints, then we obtain a cone-surface with one boundary component. If the geodesic is closed, then either $M$ becomes a surface with two boundary components or gets decomposed in two surfaces with one boundary component each.

Thus we may assume $\partial M \ne \emptyset$. By Lemma \ref{lem:ConcAngle}, if $M$ is not a triangle with vertex set $V$, then a scissors cut exists. It yields a new one or two cone-surfaces with boundary. Continue cutting until all components become triangles. In order to show that this will happen, use induction with respect to the lexicographic order on the set of pairs $(-\chi, 3m+n)$, where $\chi$ is the Euler characteristic, $m = |V \cap \inn M|$, and $n = |V \cap \partial M|$. That is, we will show that if $M'$ is a component obtained from $M$ by a scissors cut, then either $\chi(M') > \chi(M)$ or $\chi(M') = \chi(M)$ and $3m'+n'< 3m+n$.

Assume that the other end of the cut belongs to $\partial M$. If cutting yields two components $M_1$ and $M_2$, then either $\chi(M_i) < \chi(M)$ for $i = 1, 2$, so that $M_i < M$ in the lexicographic order described above, or without loss of generality $\chi(M_1) = \chi(M)$ and $M_2 \approx \B^2$. In the latter case $M_2$ contains a singular point different from the endpoints of the cut. Therefore $3m_1 + n_1 < 3m + n$ and $M_1 < M$. To show that $M_2 < M$, consider two cases. If $\chi(M) < 1$, then $\chi(M_2) > \chi(M)$; if $\chi(M) = 1$, then $M_1$ contains a singular point  different from the endpoints of the cut, so that $3m_2 + n_2 < 3m + n$. In both cases we have $M_2 < M$.

Now assume that the other end of the cut lies in the interior of $M$. Then cutting along $\gamma$ we obtain a cone-surface $M'$ with $\chi(M') = \chi(M)$ and $m' = m - 1$, $n' = n + 2$, so that $3m'+n'=3m+n-1$ and hence $M' < M$.
\end{proof}

\begin{lem}
\label{lem:ConcAngle}
Let $(M,g)$ and $V$ be as in Proposition \ref{prp:TriangSurf}, and let $C$ be a component of $\partial M$.

If for all $p \in V \cap C$ the angle at $p$ is less than $\pi$, and $(M,g)$ is not a triangle with the vertex set $V$, then there is a scissors cut starting from every $p \in C \cap V$.

If for some $p \in V \cap C$ the angle at $p$ is bigger or equal $\pi$, then there is a scissors cut starting at this $p$.
\end{lem}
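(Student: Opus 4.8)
The plan is to obtain the scissors cut by means of the \emph{exponential map at $p$}, sweeping a geodesic across the interior angle and tracking where it first returns to $V\cup\partial M$. First I would record the structure of the boundary: since each boundary point has a neighbourhood isometric to a half-disk or to a circular sector, $\partial M$ is piecewise geodesic with all breakpoints in $V$, and at a smooth (non-vertex) boundary point a geodesic of $M$ is uniquely determined by its direction. Hence a local geodesic that first meets $\partial M$ at a smooth point must do so transversally: a tangential contact would, by uniqueness, force the geodesic to coincide with $\partial M$, contradicting that this is its first return.

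Let $\theta$ denote the interior angle at $p$ and, for $\alpha\in(0,\theta)$, let $\gamma_\alpha$ be the unit-speed geodesic leaving $p$ in the interior direction making angle $\alpha$ with the boundary edge $e^-=pp^-$. Extend $\gamma_\alpha$ until it first meets $V\cup\partial M$, and call that first point $P(\alpha)$ when it exists. I would then prove: (i) if $P(\alpha)\in\partial M\setminus V$, then by the previous paragraph the contact is transverse, so $P$ is continuous near $\alpha$ and its value moves along $\partial M$; (ii) the combinatorial type of the first return can change only when $P(\alpha)$ passes through a vertex of $V$, since an interior tangency to a smooth boundary point is impossible; and (iii) as $\alpha\to0^+$ and $\alpha\to\theta^-$ the geodesic $\gamma_\alpha$ converges to the edges $e^-$ and $e^+=pp^+$, so that $P(\alpha)\to p^-$ and $P(\alpha)\to p^+$ respectively, both vertices.

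If for some $\alpha$ we have $P(\alpha)\in V$, then the arc of $\gamma_\alpha$ from $p$ to $P(\alpha)$ is a simple geodesic from $p$ to a vertex meeting neither $V$ nor $\partial M$ in its interior, i.e.\ a scissors cut, and we are done; since every vertex on $C$ is eligible in the first statement, this yields a cut from each of them. Otherwise $P$ takes values in $\partial M\setminus V$ for all $\alpha\in(0,\theta)$ and, by (i)--(iii), traces a continuous arc of $\partial M$ from $p^-$ to $p^+$ containing no vertex; together with $e^-$ and $e^+$ this forces the region swept by the geodesics $\gamma_\alpha$ to be bounded by the three edges $e^-$, $p^-p^+$, $e^+$, so that $(M,g)$ is the triangle with vertex set $\{p,p^-,p^+\}$. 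This contradicts the hypothesis of the first statement that $(M,g)$ is not such a triangle; in the second statement it is excluded automatically, because a euclidean or hyperbolic triangle has all angles strictly less than $\pi$ whereas here $\theta\ge\pi$. In either case a direction with $P(\alpha)\in V$ must exist.

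The main obstacle, and the place where the argument needs real care, is twofold: ruling out directions for which $\gamma_\alpha$ never returns to $V\cup\partial M$ (a geodesic remaining in $\inn M$ for all time), and making precise the claim that, absent any vertex on the swept boundary arc, the geodesics from $p$ fill exactly a triangle. For the first point I would show that the set of such escaping directions is closed and that at its boundary $\gamma_\alpha$ limits onto a geodesic ray accumulating at a vertex, again producing a cut. For the second, the presence of negative-curvature cone points, of any further boundary component, or of nontrivial topology in $M$ obstructs a full triangular sweep; this is precisely what lets the non-triangle hypothesis (and, in the second statement, the condition $\theta\ge\pi$) force the existence of a vertex among the first-return points.
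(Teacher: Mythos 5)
There is a genuine gap at the heart of your construction: you never establish that the arc of $\gamma_\alpha$ from $p$ to its first return $P(\alpha)$ is \emph{simple}, and in general it is not. A scissors cut is by definition a simple geodesic, and simplicity is essential for the induction in Proposition \ref{prp:TriangSurf}, since the surface is cut open along it. The lemma is stated for cone-surfaces of arbitrary topology and arbitrary curvature signs, and in that generality a geodesic leaving $p$ can wind around a handle, around a cone point of positive curvature, or spiral along an interior closed geodesic, crossing itself many times before it first meets $V\cup\partial M$: take, for instance, a hyperbolic surface with three geodesic boundary circles, one marked point of angle $\pi$ on each and no cone points; most directions at $p$ give first-return arcs with self-intersections. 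So even when some $P(\alpha)\in V$, you have not produced a scissors cut. The paper secures simplicity by different means in the two cases: in the convex case it takes a \emph{shortest} path from $p$ to an interior vertex (automatically simple, and kept off the boundary by convexity of $\partial M$), and in the reflex case it watches the \emph{first radius at which $\exp_p$ fails to be injective}, which by minimality yields either a simple geodesic ending at a cone point of positive curvature or a simple geodesic loop based at $p$ --- both legitimate cuts.

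Second, your treatment of escaping directions does not work as described, and it is precisely where the injectivity-radius idea is needed. A geodesic ray that merely \emph{accumulates} at a vertex never reaches it, so it produces no cut; and the set of escaping directions need not be closed, since a limit of escaping geodesics can hit a cone point (transverse hits of smooth boundary points are stable under perturbation, but cone-point hits are not). What actually rules out the all-escaping scenario is finite area: $\exp_p$ restricted to the angle at $p$ cannot remain injective up to arbitrarily large radius, so at some finite radius either the image meets $\partial M$ or injectivity fails, and the first such event gives a simple cut as above (the paper then handles the boundary-contact case by sliding the contact point along the edge, using the hypothesis that the angle at $p$ is at least $\pi$). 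Finally, your limit claim (iii), that $P(\alpha)\to p^{\pm}$ as $\alpha\to 0,\theta$, implicitly uses that the angles at the neighbouring vertices $p^{\pm}$ are less than $\pi$: when the angle at $p^-$ is reflex, the geodesics $\gamma_\alpha$ slide past $p^-$ and their first returns converge to the first return of the straight continuation, not to $p^-$. In the second statement of the lemma the angles at $p^{\pm}$ are unconstrained, so the concluding dichotomy ``some $P(\alpha)$ is a vertex, or the swept region is a triangle'' collapses in exactly the case where you need it.
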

\begin{proof}
Assume that the angles at all boundary vertices are less than $\pi$. If $V \subset \partial M$, then $M$ is isometric to a euclidean or hyperbolic convex polygon with the vertex set $V$, so that $|V| > 3$ implies that there is a diagonal starting at any $p \in V$. If there is $q \in V \cap \inn M$, then take the shortest path $\gamma$ from $p$ to $q$. Due to the convexity of the boundary, $\gamma \cap \partial M = \{p\}$. If $\gamma \cap V = \{p,q\}$, then $\gamma$ is a scissors cut, otherwise stop cutting at the point on $\gamma \cap V$ which is the closest to $p$.

Now assume that $p \in V \cap C$ is such that the angle at $p$ is at least $\pi$. Consider the exponential map $\exp_p$, restricted to the interior of the angle at $p$. If at some radius $r$ it ceases to be injective, then we either find a simple interior geodesic of length $r$ ending at a cone point of positive curvature, or we find a simple closed geodesic of length $2r$ based at $p$. If at some radius $r$ the exponential map meets an edge $e$ at its interior point $q$, then move $q$ along $e$ and look what happens with the geodesic $pq$. It will either meet a point from $V$, or $q$ arrives an endpoint of $e$, or its initial segment will meet the boundary. The latter cannot happen in both directions along $e$, since the angle at $p$ is at least $\pi$. The former two possibilities with the excluded latter yield a scissors cut starting at $p$.
\end{proof}

\begin{rem}
For spherical cone-surfaces a geodesic triangulation without additional points does not always exist. A necessary assumption is that the surface does not contain a subset isometric to an open hemisphere.
\end{rem}

\begin{rem}
\label{rem:Triang3}
Lemma \ref{lem:ConcAngle} is well-known for euclidean polygons and is used to prove that every non-convex polygon can be triangulated without additional vertices, \cite{Len11}. In the same article an example was given of a non-convex $3$-dimensional polytope that cannot be triangulated without additional vertices. A simpler example was given by Sch\"onhardt (a twisted octahedron).

Sch\"onhardt and Lennes polyhedra provide examples of cone-manifolds that cannot be triangulated without additional vertices. By filling the concavities of the Sch\"onhardt octahedron by tetrahedra so that three singular interior edges are created one obtains a non-triangulable example with convex boundary. It seems that the double of the Sch\"onhardt's octahedron also cannot be triangulated without additional vertices.
\end{rem}

\section{Deforming a metric}
\label{sec:Deform}
\begin{prp}
Let $(M,g)$ be a euclidean or hyperbolic cone-surface with at least one cone point and negative curvatures at all cone points. Then there is a cone-metric $g'$ on $M$ with a smaller number of cone-points and
\[
L' = L, \quad A' > A
\]
where $L$ and $A$ are the total length of boundary components and the area of $M$ with respect to the metric $g$, and $L'$ and $A'$ are the corresponding values for $g'$.
\end{prp}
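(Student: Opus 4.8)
The plan is to reduce the statement to a purely local modification on the star of a single interior cone point. First I would invoke Proposition~\ref{prp:TriangSurf} to fix a minimal geodesic triangulation of $(M,g)$; since $M$ is a surface with boundary carrying at least one interior cone point, every interior vertex of this triangulation is a cone point, hence of negative curvature, i.e.\ of cone angle $>2\pi$. Choose one such vertex $v$, with incident triangles $T_1,\dots,T_k$ arranged cyclically around it, spokes $\ell_i=|vw_i|$ and link edges $b_i=|w_iw_{i+1}|$ (indices mod $k$). The cone angle is $\Gamma=\sum_i\gamma_i$, where $\gamma_i$ is the angle of $T_i$ at $v$, and $\Gamma>2\pi$. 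Because $v$ is interior, each spoke $vw_i$ is an interior edge bounding two triangles of the star, whereas each $b_i$ is either a boundary edge of $M$ or is shared with a triangle outside the star. I will produce $g'$ by changing only the spoke lengths, leaving every $b_i$ and the entire exterior untouched.

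The deformation I propose is the additive elongation
\[
\ell_i(t)=\ell_i+t,\qquad t\ge 0,
\]
applied to all spokes at once, with every $b_i$ frozen; this is the discrete counterpart of a conformal factor concentrated at $v$. Its decisive feature is that it preserves the pairwise differences $\ell_i-\ell_{i+1}$, so that in each triangle the only binding inequality, $|\ell_i(t)-\ell_{i+1}(t)|=|\ell_i-\ell_{i+1}|<b_i$, holds for all $t\ge0$; hence no triangle ever degenerates and each $g(t)$ is a genuine cone-metric (only edge lengths change, the gluing combinatorics is fixed). Writing $x=\ell_i+t$, $y=\ell_{i+1}+t$, the euclidean and hyperbolic laws of cosines give $\cos\gamma_i(t)=\tfrac12(x/y+y/x)-b_i^2/(2xy)\to1$, respectively $\cos\gamma_i(t)=\coth x\coth y-\cosh b_i/(\sinh x\sinh y)\to1$, so $\gamma_i(t)\to0$ and therefore $\Gamma(t)\to0$. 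Since $\Gamma(0)>2\pi$ and $\Gamma$ is continuous, there is $t^*>0$ with $\Gamma(t^*)=2\pi$; I set $g'=g(t^*)$. At this value $v$ has cone angle exactly $2\pi$, i.e.\ it is a regular point, so $g'$ has one fewer cone point.

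It remains to check $A'>A$ and $L'=L$ and that the hypotheses propagate. Only the star changes, and each triangle strictly gains area: in the euclidean case
\[
16\,\operatorname{area}(T_i)^2=\bigl((x+y)^2-b_i^2\bigr)\bigl(b_i^2-(\ell_i-\ell_{i+1})^2\bigr),
\]
whose second factor is constant and whose first factor strictly increases in $t$; in the hyperbolic case l'Huilier's formula $\tan^2\tfrac{\operatorname{area}(T_i)}{4}=\tanh\tfrac{\sigma}{2}\tanh\tfrac{\sigma-x}{2}\tanh\tfrac{\sigma-y}{2}\tanh\tfrac{\sigma-b_i}{2}$, with $2\sigma=x+y+b_i$, has the two factors $\tanh\tfrac{\sigma}{2}$, $\tanh\tfrac{\sigma-b_i}{2}$ strictly increasing and the other two constant, since $\sigma$ and $\sigma-b_i$ grow while $\sigma-x$, $\sigma-y$ are fixed. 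Summing gives $A'>A$; and no boundary edge changes length, because the spokes are interior and the $b_i$ are frozen, so $L'=L$. The same computations show that both base angles of each $T_i$ strictly increase with $t$, so the cone angle at every surviving vertex $w_i$ only grows: no singularity of positive curvature is created and every remaining cone point keeps negative curvature, which is exactly what is needed to iterate the step toward Proposition~\ref{prp:ElimCone}.

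I expect the main subtlety to be the \emph{choice} of deformation rather than any single estimate. The naive discrete-conformal move, multiplicative rescaling $\ell_i\mapsto\lambda\ell_i$ of the spokes, also decreases $\Gamma$, but it alters the differences $\ell_i-\ell_{i+1}$ and can force a triangle to collapse ($\gamma_i\to0$) while $\Gamma$ is still above $2\pi$, which would oblige one to track edge-flips and re-triangulations inside the star. The additive elongation is engineered precisely to avoid this, since freezing the differences keeps all triangle inequalities valid for every $t$; recognizing it as the correct ``non-negative conformal factor'' is the crux. After that, only the two monotonicity identities above (the hyperbolic one via l'Huilier being the most computational) and the limit $\gamma_i\to0$ remain to be verified.
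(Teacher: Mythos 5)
Your proof is correct, and it shares the paper's skeleton --- take a minimal triangulation from Proposition \ref{prp:TriangSurf}, elongate all spokes at one interior cone point, and stop when a cone angle reaches $2\pi$ --- but the deformation at its core is genuinely different, and the difference is instructive. The paper uses $\ell\mapsto\sqrt{\ell^2+t}$ in the euclidean case and $\ell\mapsto\arcosh(e^t\cosh\ell)$ in the hyperbolic case, which is exactly the lift of the apex orthogonally over the plane of the base triangle; existence of the deformed triangles and the area growth then come for free from the pyramid picture (orthogonal projection decreases area), with no computation. The price is that under this lift an obtuse base angle \emph{decreases}, so the cone angles at the neighbors of the chosen vertex may drop; the paper therefore must stop at the first $t_0$ at which the angle at the chosen vertex \emph{or at one of its neighbors} reaches $2\pi$, and the eliminated singularity may be a neighbor rather than the chosen vertex. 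Your additive elongation $\ell\mapsto\ell+t$ buys exactly what that rule protects against: besides preserving the triangle inequalities, it makes both base angles of every star triangle strictly increase, so only $v$ needs watching, $v$ itself is the vertex that gets flattened, and negative curvature at all surviving cone points is automatic --- which, as you note, is what the iteration toward Proposition \ref{prp:ElimCone} requires (the paper secures that propagation through its stopping rule instead). The cost is that everything must be checked by formula --- Heron, hyperbolic l'Huilier, and the law-of-cosines limits, all of which you state correctly. The one step you assert without displaying it (``the same computations show that both base angles strictly increase'') is the only real gap, and it is precisely the step your simplified stopping rule leans on; it does hold and deserves a line. In the euclidean case, with $d=\ell_i-\ell_{i+1}$,
\[
\cos\beta_i=\frac{x^2+b_i^2-y^2}{2xb_i}=\frac{d}{b_i}+\frac{b_i^2-d^2}{2xb_i},
\]
which is strictly decreasing in $t$ because $|d|<b_i$. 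In the hyperbolic case, differentiating $\cos\beta_i=(\cosh x\cosh b_i-\cosh y)/(\sinh x\sinh b_i)$ along $x=\ell_i+t$, $y=\ell_{i+1}+t$ gives, after simplification, the numerator $\sinh b_i\,\bigl(\cosh d-\cosh b_i\bigr)<0$, so again $\beta_i$ strictly increases. With this inserted your argument is complete (and, like the paper's, it tacitly assumes each star triangle has exactly one vertex at $v$, i.e.\ no loop edges at $v$ --- a combinatorial degeneracy both write-ups pass over).
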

\begin{proof}
Choose a geodesic triangulation of $(M,g)$ such that all of its interior vertices are cone-points of $g$. This is possible due to Proposition~\ref{prp:TriangSurf}. Pick an interior vertex $p$ and consider a family of cone-metrics $g_t$ that coincide with $g$ outside of the star of $p$ and where every edge $pq$ is deformed according to
\[
\ell_{pq}(t) =
\begin{cases}
\sqrt{\ell_{pq}^2 + t}, &\text{ in the euclidean case}\\
\arcosh(e^t \cosh b), &\text{ in the hyperbolic case}
\end{cases}
\]
Here $\ell_{pq}$ is the length of $pq$ in the metric $g$. By Lemma \ref{lem:NewLengths}, the new edge lengths satisfy the triangle inequalities, therefore can be used to replace the triangles in the star of $p$ with new triangles. Let $t_0$ be the minimum $t$ for which the cone angle at $p$ or at one of its neighbors becomes equal to $2\pi$. Such a $t$ exists, because by Lemma \ref{lem:NewLengths} the angle at $p$ tends to $0$ as $t$ tends to $+\infty$. By Lemma \ref{lem:NewLengths}, the metric $g' = g_{t_0}$ has a larger area than $g$.
\end{proof}
%  Also by Lemma \ref{lem:NewLengths}, the area of $M$ with respect to the metric $g_t$ is larger than with respect to the metric $g$. On the other hand, the lengths of the boundary components of $M$ don't change. 

\begin{lem}
\label{lem:NewLengths}
Let $\Delta$ be a euclidean or hyperbolic triangle with side lengths $a$, $b$, $c$. Then for every $t > 0$ the triangle $\Delta_t$ with side lengths
\[
\begin{aligned}
&a, \sqrt{b^2 + t}, \sqrt{c^2 + t} \quad &\text{in the euclidean case}\\
&a, \arcosh(e^t \cosh b), \arcosh(e^t \cosh c) &\text{in the hyperbolic case}
\end{aligned}
\]
exists and has a larger area than the triangle $\Delta$. Besides, the angle of $\Delta_t$ opposite to the side $a$ tends to $0$ as $t$ tends to $+\infty$.
\end{lem}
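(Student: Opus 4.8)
The plan is to recognize a single geometric mechanism behind the two formulas and to treat the euclidean and hyperbolic cases in parallel: passing from $\Delta$ to $\Delta_t$ amounts to pushing the vertex opposite the side $a$ straight away from the line carrying $a$, while keeping that side fixed.

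To set this up I realize $\Delta$ with $a$ on a geodesic line $\ell$; let $P$ be the opposite vertex, $f$ the foot of the perpendicular from $P$ to $\ell$, $\rho=\operatorname{dist}(P,\ell)$, and $d_1,d_2$ the signed distances along $\ell$ from $f$ to the endpoints of $a$. The right triangles $PfB_i$ give $(\text{side}_i)^2=\rho^2+d_i^2$ in the euclidean case and $\cosh(\text{side}_i)=\cosh\rho\,\cosh d_i$ in the hyperbolic case. Hence replacing $\rho$ by $\rho_t$ with $\rho_t^2=\rho^2+t$, respectively $\cosh\rho_t=e^t\cosh\rho$, while keeping $d_1,d_2$ fixed, reproduces exactly the side lengths in the statement. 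As a triangle is determined by its sides, $\Delta_t$ is simply $\Delta$ with the apex moved out to distance $\rho_t>\rho$; in particular $\Delta_t$ exists for all $t>0$ and $\rho_t$ increases strictly with $t$. (Existence also follows directly from the triangle inequalities: each deformed side increases with $t$, so their sum exceeds $a$, while their difference decreases --- in the hyperbolic case because $\frac{d}{dt}(b_t-c_t)=\coth b_t-\coth c_t\le 0$ --- and therefore stays below its initial value, which is less than $a$.)

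The euclidean area is then immediate: writing $S_t$ for the area of $\Delta_t$ one has $S_t=\tfrac12 a\rho_t$, so $S_t^2=S_0^2+\tfrac14 a^2 t$ is strictly increasing. For the hyperbolic area I would cut $\Delta_t$ along $Pf$ into two right triangles with legs $\rho_t$ and $|d_i|$ and reduce the claim to a single monotonicity statement: the area $R(\rho,d)$ of a hyperbolic right triangle with legs $\rho,d$ strictly increases in $\rho$. Writing $R=\tfrac\pi2-A'-B'$ with $\tan A'=\tanh\rho/\sinh d$ and $\tan B'=\tanh d/\sinh\rho$, differentiation collapses --- the two denominators coming from $A'$ and $B'$ turn out to be equal --- to the clean formula
\[
\frac{\partial R}{\partial\rho}=\frac{\sinh d}{1+\cosh\rho\,\cosh d}>0 .
\]

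The only real obstacle I anticipate is that $f$ need not lie between the endpoints of $a$: when $\Delta_t$ is obtuse at a base vertex the area is a difference $R(\rho_t,d_1)-R(\rho_t,d_2)$ with $d_1>d_2>0$ rather than a sum. This is settled by the same formula, because $\partial R/\partial\rho$ is itself strictly increasing in $d$ (its $d$-derivative equals $(\cosh d+\cosh\rho)/(1+\cosh\rho\cosh d)^2>0$), so the difference increases in $\rho$ as well. In both cases $S_t$ increases strictly with $\rho_t$, hence with $t$. Finally, the angle $\alpha_t$ opposite $a$ tends to $0$: geometrically, as $t\to\infty$ the apex recedes to the ideal endpoint of the perpendicular and the fixed segment $a$ subtends a vanishing angle; analytically, the law of cosines gives $\cos\alpha_t=\dfrac{e^{2t}\cosh b\,\cosh c-\cosh a}{\sqrt{(e^{2t}\cosh^2 b-1)(e^{2t}\cosh^2 c-1)}}\to 1$, and the euclidean computation is identical.
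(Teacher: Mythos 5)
Your proof is correct, and its first half is exactly the paper's: both arguments use the euclidean/hyperbolic Pythagorean theorem to identify $\Delta_t$ with the triangle obtained from $\Delta$ by pushing the apex out along the perpendicular to the line carrying $a$ (keeping the foot distances $d_1,d_2$ fixed), which settles existence; for the angle claim the paper reads it off from this same picture as the apex recedes to infinity, while you substitute an equivalent law-of-cosines computation. Where you genuinely diverge is the area inequality. The paper goes into three dimensions: $\Delta_t$ is realized as a lateral face of a pyramid with base $\Delta$ and apex directly above the vertex $A$, and the orthogonal projection onto the base plane decreases area (justified in the hyperbolic case by quadrilaterals with two adjacent right angles). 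You instead stay in the plane: cut along the perpendicular into a sum --- or, in the obtuse configuration, a difference --- of right triangles, and differentiate the hyperbolic right-triangle area $R(\rho,d)=\tfrac{\pi}{2}-A'-B'$, obtaining $\partial R/\partial\rho=\sinh d/(1+\cosh\rho\cosh d)>0$ together with its monotonicity in $d$, which is precisely what the difference case needs; both derivative formulas check out, and since $d_1,d_2$ do not move with $t$, the acute/obtuse dichotomy is static, so your case analysis is complete. The trade-off: your route is elementary, fully explicit and self-contained (no ambient $3$-space, no appeal to the fact that projections contract hyperbolic area), and it makes the obtuse case visible rather than letting it be absorbed silently; the paper's route is coordinate-free and uniform across the two geometries and both configurations, at the price of invoking the $3$-dimensional projection lemma. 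Either argument fully proves the lemma.
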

\begin{proof}
The Pythagorean theorem for euclidean and hyperbolic right-angled triangles implies that the triangle $\Delta_t$ can be obtained from the triangle $\Delta$ by moving the vertex $A$ opposite to the side $a$ along the perpendicular to this side. Equivalently, $\Delta_t$ is a side of a triangular pyramid with the base $\Delta$ and the apex directly over the vertex $A$, see Figure \ref{fig:Lengths}.

\begin{figure}[ht]
\begin{center}
\begin{picture}(0,0)%
\includegraphics{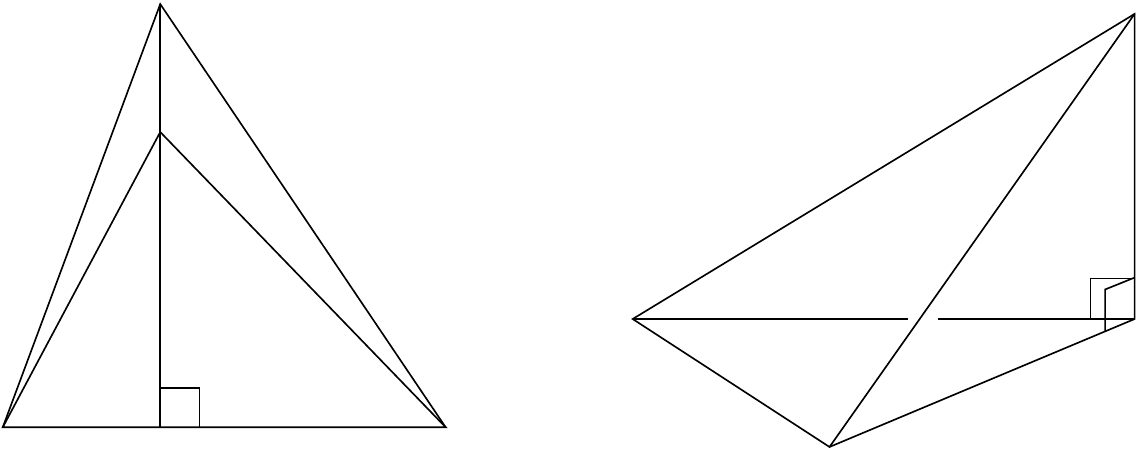}%
\end{picture}%
\setlength{\unitlength}{4144sp}%
\begingroup\makeatletter\ifx\SetFigFont\undefined%
\gdef\SetFigFont#1#2#3#4#5{%
  \reset@font\fontsize{#1}{#2pt}%
  \fontfamily{#3}\fontseries{#4}\fontshape{#5}%
  \selectfont}%
\fi\endgroup%
\begin{picture}(5199,2137)(-11,-1376)
\put(3196,-1096){\makebox(0,0)[lb]{\smash{{\SetFigFont{9}{10.8}{\rmdefault}{\mddefault}{\updefault}{\color[rgb]{0,0,0}$a$}%
}}}}
\put(4546,-1051){\makebox(0,0)[lb]{\smash{{\SetFigFont{9}{10.8}{\rmdefault}{\mddefault}{\updefault}{\color[rgb]{0,0,0}$b$}%
}}}}
\put(4546,-331){\makebox(0,0)[lb]{\smash{{\SetFigFont{9}{10.8}{\rmdefault}{\mddefault}{\updefault}{\color[rgb]{0,0,0}$b_t$}%
}}}}
\put(3646,-646){\makebox(0,0)[lb]{\smash{{\SetFigFont{9}{10.8}{\rmdefault}{\mddefault}{\updefault}{\color[rgb]{0,0,0}$c$}%
}}}}
\put(3646,-106){\makebox(0,0)[lb]{\smash{{\SetFigFont{9}{10.8}{\rmdefault}{\mddefault}{\updefault}{\color[rgb]{0,0,0}$c_t$}%
}}}}
\put(316,-736){\makebox(0,0)[lb]{\smash{{\SetFigFont{9}{10.8}{\rmdefault}{\mddefault}{\updefault}{\color[rgb]{0,0,0}$b$}%
}}}}
\put(226,-106){\makebox(0,0)[lb]{\smash{{\SetFigFont{9}{10.8}{\rmdefault}{\mddefault}{\updefault}{\color[rgb]{0,0,0}$b_t$}%
}}}}
\put(991,-1321){\makebox(0,0)[lb]{\smash{{\SetFigFont{9}{10.8}{\rmdefault}{\mddefault}{\updefault}{\color[rgb]{0,0,0}$a$}%
}}}}
\put(1306,-691){\makebox(0,0)[lb]{\smash{{\SetFigFont{9}{10.8}{\rmdefault}{\mddefault}{\updefault}{\color[rgb]{0,0,0}$c$}%
}}}}
\put(1306,-61){\makebox(0,0)[lb]{\smash{{\SetFigFont{9}{10.8}{\rmdefault}{\mddefault}{\updefault}{\color[rgb]{0,0,0}$c_t$}%
}}}}
\end{picture}%

\end{center}
\caption{Deformation of edge lengths.}
\label{fig:Lengths}
\end{figure}

The first realization of $\Delta_t$ implies that the angle opposite to the side $a$ tends to $0$ as the vertex goes to infinity. The second realization implies that $\Delta_t$ has a larger area than $\Delta$: the orthogonal projection decreases the areas both in the euclidean and in the hyperbolic space. In the euclidean geometry the lengths parallel to the side $a$ are preserved by the projection, while those orthogonal to $a$ are decreased; in the hyperbolic geometry the lengths in both directions are increased, as follows from consideration of quadrilaterals with two adjacent right angles.
\end{proof}

% \section{Discrete conformal deformations}
% \label{sec:Conform}

\end{document}